\newtheorem{theorem}{Theorem}[section]
\newtheorem{claim}[theorem]{Claim}
\newtheorem{lemma}[theorem]{Lemma}
\theoremstyle{definition}
\newtheorem{definition}[theorem]{Definition}
\theoremstyle{remark}
\newcommand{\on}{{\rm Ord}}
\def\mathunderaccent#1#2 {\let\theaccent#1\skewfactor#2
\mathpalette\putaccentunder}
\def\putaccentunder#1#2{\oalign{$#1#2$\crcr\hidewidth
\vbox to.2ex{\hbox{$#1\skew\skewfactor\theaccent{}$}\vss}\hidewidth}}
\title[Adding  Abraham clubs and  $\alpha$-properness]{Adding  Abraham clubs and  $\alpha$-properness}
\author{Mohammad Golshani}
\address{School of Mathematics\\
Institute for Research in Fundamental Sciences (IPM)\\
P.O.Box:
19395-5746\\
Tehran-Iran.}
\email{golshani.m@gmail.com}
\urladdr{https://math.ipm.ac.ir/~golshani/}
\author{Rouholah Hoseini Naveh}
\address{Department of Pure Mathematics\\
Faculty of Mathematics \& Computer\\
Shahid Bahonar University of Kerman\\
Kerman, Iran}
\email{r.hoseini.nave@Gmail.com}
\urladdr{https://rhoseininaveh.github.io}
\thanks{The first author's research has been supported by a grant from IPM (No.1403030417)}
\thanks{The second author's research is partially supported by FWF (P33420), hosted by Jakob Kellner}
\keywords {Strongly Proper forcing, $\alpha$-strongly proper forcing , Adding a club .}
\begin{document}
\begin{abstract}
For every indecomposable ordinal $\alpha < \omega_1$, we introduce a variant of Abraham forcing for adding a club in $\omega_1$,
which is $<\alpha$-proper but not $\alpha$-proper.
\end{abstract}
\makeatother
\maketitle
\setcounter{section}{-1}
\section{Introduction}
To preserve $\aleph_1$, the properness property for a forcing notion was introduced by Shelah during his initial investigation of countable support iterations, see \cite{shelah80}. A poset $\mathbb{P}$ is called proper if forcing with $\mathbb{P}$  preserves stationary subsets of $[\lambda]^{\aleph_0}$, for all uncountable regular cardinals $\lambda$. If $\mathbb{P}$ is proper, then every countable set of ordinals in the extension is covered by a countable set of ordinals from the ground model, thus in particular forcing with $\mathbb{P}$ does not collapse $\aleph_1$. Shelah also introduced a characterization of properness, using $(N, \mathbb{P})$-generic conditions. The forcing notion $\mathbb{P}$ is proper if and only if for every large enough regular cardinals $\lambda$, and for club many countable elementary substructure $N$ of $H(\lambda)$, for every condition $p \in N$, there is $q \leq p$ which is an $(N, \mathbb{P})$-generic condition, i.e., for every dense open subset $D \subseteq \mathbb{P}$ with $D \in N$, $D \cap N$ is predense below $q$.

Afterwards, Mitchell introduced strong properness using strongly $(N, \mathbb{P})$-generic conditions, which differs from $(N, \mathbb{P})$-generic conditions only in the point that every dense open $D \subseteq \mathbb{P} \cap N$ must be predense below that condition \cite{mitchell}. Obviously a notion of forcing is proper, if it is strongly proper.

Shelah also introduced, for each $\alpha<\omega_1$, a technical strengthening of properness known as ``$\alpha$-properness'',
and showed for every indecomposable ordinal $\alpha,$ there is a forcing notion which is $<\alpha$-proper
but  not $\alpha$-proper.

In this paper,  for every indecomposable ordinal $\alpha,$  we introduce a variant of Abraham forcing from \cite{AS} for adding a club in $\omega_1$, which is $<\alpha$-proper, but not $\alpha$-proper, so giving a new example for separation $\alpha$-properness.

In section \ref{2} and inspired by Neeman's approach \cite{neeman}, we introduce a variant of Abraham forcing for adding a club in $\omega_1$ with finite conditions and show that it is strongly proper, but  not  $\omega$-proper. Then in section \ref{3}, for every indecomposable ordinal $\alpha < \omega_1,$ we introduce  a generalization  $\mathbb{P}[\alpha]$ of our previous forcing  which adds a club in $\omega_1$, and such that  $\mathbb{P}[\alpha]$ is $< \alpha$-proper but not $\alpha$-proper. Indeed the forcing has the stronger property that if $\beta < \alpha$ and if $\mathcal{N}=\langle N_\xi: \xi \leq \beta \rangle$ is a $\beta$-tower with $\mathbb{P}[\alpha] \in N_0,$ then  every condition $p \in N_0$ has an extension $q$ such that $q$ is strongly $(N_{\xi}, \mathbb{P}[\alpha])$-generic, whenever $\xi \leq \beta$ is not a limit ordinal, and is $(N_{\delta}, \mathbb{P}[\alpha])$-generic, whenever $\delta \leq \beta$ is a limit ordinal.
\section{preliminaries}
For a regular cardinal $\lambda$, let $H(\lambda)$ denote the collection of all sets $x$, whose transitive closure has size less than $\lambda$. We
work with  the structure $\langle H(\lambda), \in, \lhd^* \rangle$, where $\lhd^*$  is a fix well ordering of $H(\lambda)$. We use $N \prec H(\lambda)$ if $\langle N, \in, \lhd^* \rangle$ is an elementary substructure of $\langle H(\lambda), \in, \lhd^* \rangle$ and show the ordinal $N \cap \omega_1$ by $\delta_N$. Let also $\mathcal{S}$ represents the collection of all countable elementary substructure of $H(\omega_1)$.

Let $\mathbb{P}$ be a notion of forcing and let $\lambda$ be a regular cardinal. We say $\lambda$ is large enough (with respect to $\mathbb{P}$), if $\lambda > (2^{|\text{tr}(\mathbb{P})|})^+$, where $\text{tr}(\mathbb{P})$ is the transitive closure of $\mathbb{P}$. Note that if $\lambda$ is large enough, then all interesting statements about $\mathbb{P}$ are absolute between $H(\lambda)$ and the ground model $V$.
\begin{definition}\cite{mitchell}
Let $P$ be a notion of forcing and let $\lambda$ be a large enough regular cardinal. Let $N \prec H(\lambda)$ with $\mathbb{P} \in N$. A condition $q \in \mathbb{P}$ is said to be strongly $(N, \mathbb{P})$-generic if every dense open subset $D \subseteq \mathbb{P} \cap N$ is predense below $q$, i.e. for every $r \in \mathbb{P}$, if $r \leq q$ then there are $s \in \mathbb{P}$ and $t \in D \cap \mathbb{P}$ such that $s$ extends both $r$ and $t$.
\end{definition}
\begin{definition}\cite{todorcevic3}
A notion of forcing $\mathbb{P}$ is called strongly proper if for every large enough regular cardinal $\lambda$ and club many countable $N \prec H(\lambda)$ with $\mathbb{P} \in N$, every $p \in \mathbb{P} \cap N$ has a strongly $(N, \mathbb{P})$-generic extension.
\end{definition}
The next lemma is evident.
\begin{lemma}
If $\mathbb{P}$ is strongly proper, then $\mathbb{P}$ is proper, in particular forcing with $\mathbb{P}$ does not collapse $\aleph_1$.
\end{lemma}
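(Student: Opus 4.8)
The plan is to reduce everything to the single observation that a strongly $(N,\mathbb{P})$-generic condition is automatically $(N,\mathbb{P})$-generic, after which the two claimed conclusions follow directly from the definitions recalled in the introduction. So the real content is the implication between the two genericity notions, and the remainder is bookkeeping.

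First I would fix a large enough regular $\lambda$, a countable $N \prec H(\lambda)$ with $\mathbb{P} \in N$, and a strongly $(N,\mathbb{P})$-generic condition $q$. Let $D \subseteq \mathbb{P}$ be dense open with $D \in N$; the goal is to show that $D \cap N$ is predense below $q$. The key step is to verify, using elementarity of $N$, that $D \cap N$ is a dense open subset of the poset $\mathbb{P} \cap N$. For density, given any $p \in \mathbb{P} \cap N$, the density of $D$ together with $D, p \in N$ lets me reflect into $N$ a witness $d \leq p$ with $d \in D$, so that $d \in D \cap N$; for openness, if $d \in D \cap N$ and $r \leq d$ with $r \in \mathbb{P} \cap N$, then $r \in D$ since $D$ is open, whence $r \in D \cap N$. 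Thus $D \cap N$ is dense open in $\mathbb{P} \cap N$, and the strong $(N,\mathbb{P})$-genericity of $q$ then gives that $D \cap N$ is predense below $q$. This is exactly the requirement that $q$ be $(N,\mathbb{P})$-generic.

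Granting this, properness follows at once: by hypothesis there is a club of countable $N \prec H(\lambda)$ such that every $p \in \mathbb{P} \cap N$ has a strongly $(N,\mathbb{P})$-generic extension $q$, and by the previous paragraph every such $q$ is $(N,\mathbb{P})$-generic, which is precisely the characterization of properness recalled in the introduction. Finally, the non-collapse of $\aleph_1$ is the standard consequence of properness already noted there: since $\mathbb{P}$ is proper, every countable set of ordinals in a generic extension is covered by a countable set from the ground model, so no new cofinal map from $\omega$ into $\omega_1$ is added and $\aleph_1$ is preserved.

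I do not expect any genuine obstacle here, and indeed the lemma is flagged as evident; the only place where a little care is needed is the elementarity argument that $D \cap N$ is dense open in $\mathbb{P} \cap N$, which is the hinge converting the ``ambient dense open set $D \in N$'' of ordinary genericity into the ``dense open subset of $\mathbb{P} \cap N$'' demanded by strong genericity.
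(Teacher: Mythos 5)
Your proposal is correct, and it fills in exactly the standard argument that the paper treats as evident (the paper offers no proof of this lemma): a strongly $(N,\mathbb{P})$-generic condition is $(N,\mathbb{P})$-generic because, for $D \subseteq \mathbb{P}$ dense open with $D \in N$, elementarity makes $D \cap N$ dense open in $\mathbb{P} \cap N$, and then the characterization of properness via generic conditions applies. Nothing further is needed.
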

\begin{definition}\cite{shelah80}
Let $\alpha< \omega_1$. The sequence $\mathcal{N}=\langle N_{\xi}\colon \xi \leq \alpha \rangle$ is said to be an $\alpha$-tower if for some regular cardinal $\lambda$,
\begin{enumerate}
\item
$N_{\xi}$ is a countable elementary substructure of $H(\lambda)$ for all $\xi \leq \alpha$;
\item
$\alpha \in N_0$;
\item
$N_{\zeta} \in N_{\zeta +1}$ for all $\zeta < \alpha$;
\item
$N_{\delta}= \bigcup_{\xi < \delta}N_{\xi}$ for all limit ordinals $\delta \leq \alpha$;
\item
$\langle N_{\zeta}: \zeta \leq \xi \rangle \in N_{\xi +1}$ for every $\xi < \alpha$.
\end{enumerate}
\end{definition}
The notion of $\alpha$-properness is defined as follows.
\begin{definition}\cite{shelah80}
Assume $\alpha< \omega_1$, and $\mathbb{P}$ is a forcing notion. $\mathbb{P}$ is called $\alpha$-proper if for every $\alpha$-tower
$\mathcal{N}=\langle N_{\xi}\colon \xi \leq \alpha \rangle$ with $\mathbb{P} \in N_0$, every condition $p \in \mathbb{P} \cap N_0$ has an extension $q$ which is $(\mathcal{N}, \mathbb{P})$-generic, i.e., $q$ is an $(N_\xi, \mathbb{P})$-generic condition for each $\xi \leq \alpha$.
$\mathbb{P}$ is called $<\alpha$-proper, if it is $\beta$-proper for each $\beta < \alpha.$
\end{definition}
\section{Adding a club in $\omega_1$ by finite conditions}\label{2}
In this section we introduce a variant of Abraham forcing \cite{AS}, and show that it is strongly proper but not $\omega$-proper. We start by defining our forcing notion $\mathbb{P}$.
\begin{definition}
Let  $\mathbb{P}$ consist of pairs $p=\langle \mathcal{M}_p, f_p \rangle$, where
\begin{enumerate}
\item
$\mathcal{M}_p= \langle M^p_i: i < n_p  \rangle$ is a finite $\in$-increasing sequence of elements of $\mathcal{S}$, and
\item
the function $f_p: \mathcal{M}_p \longrightarrow H(\omega_1)$ is defined such that $f_p(M^p_i)$ is a finite subset of $M^p_{i+1}$ if $i<n_p-1$, and $f_p(M^p_{n_p-1})$ is a finite subset of $H(\omega_1)$.
\end{enumerate}
For $p, q \in \mathbb{P}$, we say $q \leq p$ if and only if $\mathcal{M}_p \subseteq \mathcal{M}_q$ and $f_p(M) \subseteq f_q(M)$ for every $M \in \mathcal{M}_p$.
\end{definition}
\begin{lemma}
If $G \subseteq \mathbb{P}$ is a generic filter, then $C=\{\delta_M \colon M \in \mathcal{M}_p \text{ for some } p \in G \}$ is a club in $\omega_1$.
\end{lemma}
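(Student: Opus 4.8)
The plan is to show that $C$ is both closed and unbounded in $\omega_1$. Since the two parts are somewhat independent, I would handle them separately, with unboundedness being the more delicate of the two because it relies on genericity together with the properness machinery implicit in the forcing.

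For \emph{unboundedness}, fix an arbitrary $\gamma < \omega_1$; I would argue in $V[G]$ that $C$ contains some $\delta_M > \gamma$. The key is a density argument: given any condition $p = \langle \mathcal{M}_p, f_p \rangle$, I want to find $q \leq p$ and some $M \in \mathcal{M}_q$ with $\delta_M > \gamma$. To do this, choose a countable $N \prec H(\omega_1)$ with $p, \gamma \in N$ and $\gamma < \delta_N$; such $N$ exists since $\delta_N$ can be made arbitrarily large below $\omega_1$. One must check that $N$ can be appended to the top of the tower $\mathcal{M}_p$ while respecting the $\in$-increasing requirement and the side-condition bookkeeping on $f_p$. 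The natural move is to set $q = \langle \mathcal{M}_p {}^\frown \langle N \rangle, f_q \rangle$ where $f_q$ extends $f_p$ by assigning, say, $f_q(N) = \emptyset$ and adjusting $f_q$ on the previous top model of $\mathcal{M}_p$ so that its value is a finite subset of $N$ (which is automatic since it was already a finite subset of $H(\omega_1)$ and $N$ was chosen large). Then $q \leq p$ and $\delta_N \in C$. Hence the set $D_\gamma = \{ q : \exists M \in \mathcal{M}_q,\ \delta_M > \gamma \}$ is dense, so by genericity $G \cap D_\gamma \neq \emptyset$, giving $\delta_M > \gamma$ in $C$.

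For \emph{closedness}, suppose $\eta < \omega_1$ is a limit of elements of $C$, and I must show $\eta \in C$. The idea is that the generic filter amalgamates a coherent increasing sequence of models, and the limit of their ordinals is itself realized as $\delta_M$ for a model $M$ appearing in some condition. Concretely, if $\eta = \sup_n \delta_{M_n}$ with each $\delta_{M_n} \in C$, witnessed by conditions $p_n \in G$ containing $M_n \in \mathcal{M}_{p_n}$, then I would use the directedness of $G$ together with a density argument to produce a condition whose tower contains a model $M$ with $\delta_M = \eta$; the candidate $M$ is essentially the elementary hull of $\bigcup_n M_n$, whose intersection with $\omega_1$ is exactly $\sup_n \delta_{M_n} = \eta$. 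One should verify that such an $M$ lies in $\mathcal{S}$ and can be inserted into a condition extending the relevant $p_n$'s.

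The main obstacle I expect is the closedness argument, specifically justifying that the limit ordinal $\eta$ is \emph{attained} as $\delta_M$ for a genuine model $M \in \mathcal{S}$ that can legitimately be placed into a condition. This requires showing that $\bigcup_n M_n$ (or its Skolem hull) is a countable elementary substructure of $H(\omega_1)$ with $\delta_M = \eta$, and that the side-condition function $f$ can be coherently defined on the enlarged tower so that the resulting pair is a legitimate condition below the $p_n$. Making the density argument for $\eta \in C$ uniform requires care, since we need a single condition in $G$ (or a condition compatible with $G$ via a density argument) that witnesses $\eta \in C$; I would isolate this as the crux and devote most of the work to constructing the appropriate model and verifying the extension relation.
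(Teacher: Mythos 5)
Your unboundedness argument is correct and is essentially the paper's: given $p$ and $\gamma$, append a countable $N\prec H(\omega_1)$ with $p,\gamma\in N$ on top of $\mathcal{M}_p$, set $f(N)=\emptyset$, and note that $f_p(M^p_{n_p-1})\subseteq N$ is automatic because $p\in N$ and that set is finite. The gap is in the closedness half, and it is not a gap you can close by ``devoting most of the work'' to your plan: the plan itself fails. First, the model you propose to insert does not exist in the ground model. The sequence $\langle M_n: n<\omega\rangle$ witnessing $\eta=\sup_n\delta_{M_n}$ is assembled from infinitely many conditions of $G$, so it, its union, and any hull of that union are in general only elements of $V[G]$; conditions are ground-model objects, so such a model can never occur in any $\mathcal{M}_q$. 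Second, and more fundamentally, no density argument can place into $G$ a condition containing a model with $\delta_M=\eta$, because the set of such conditions is \emph{not} dense: if $q$ has consecutive models $M^q_i\in M^q_{i+1}$ with $\delta_{M^q_i}<\eta$ and some ordinal $\xi$ with $\eta\leq\xi<\delta_{M^q_{i+1}}$ lies in the finite set $f_q(M^q_i)$, then in any extension $r\leq q$ the model immediately following $M^q_i$ in $\mathcal{M}_r$ must contain $\xi$ as an element (since $f_r(M^q_i)\supseteq f_q(M^q_i)$ must be a subset of that model), hence has $\delta>\xi$; so no extension of $q$ ever contains a model with $\delta\in(\delta_{M^q_i},\xi]$, in particular none with $\delta=\eta$.

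This sealing mechanism, which your proposal never invokes, is the entire reason the component $f$ is present in the forcing, and the paper's closure proof runs on it in the contrapositive direction: one shows that if $p$ forces $\gamma$ to be a limit point of $\dot{C}$ but does not force $\gamma\in\dot{C}$, then (after extending so that $\delta_{M^p_i}<\gamma<\delta_{M^p_{i+1}}$ for consecutive models) one may pick $\gamma<\xi<\delta_{M^p_{i+1}}$ and add $\xi$ to $f(M^p_i)$; the resulting condition forces $\dot{C}\cap(\delta_{M^p_i},\xi)=\emptyset$, so $\gamma$ is not a limit point of $\dot{C}$ --- a contradiction. Equivalently, for each $\gamma$ the set of conditions that either contain a model with $\delta_M=\gamma$ or seal off an interval $(\delta_{M_i},\xi)$ with $\delta_{M_i}<\gamma<\xi$ is dense, and if $\gamma$ really is a limit point of $C$ in $V[G]$, then the condition $G$ meets in this set must be of the first kind. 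In short, the model witnessing $\eta\in C$ is found by genericity, not constructed as a limit of the $M_n$; the construction you outline is both unavailable and unnecessary.
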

\begin{proof}
First, we need to prove the following claim:
\begin{claim} \label{extension}
For every $p \in \mathbb{P}$ and $\gamma \in \omega_1$, there is  $p' \leq p$ such that $\gamma < \delta_{N}$ for some $N \in \mathcal{M}_{p'}$.
\end{claim}
\begin{proof}[Proof of the Claim] Since $p, \gamma \in H(\omega_1)$, we can find $N \prec H(\omega_1)$ such that $p, {\gamma} \in N$. Let $p'=\langle \mathcal{M}_{p'}, f_{p'}   \rangle$ be such that $\mathcal{M}_{p'}= \mathcal{M}_p \cup \{N\}$, $f_{p'}(M)=f_p(M)$ for every $M \in \mathcal{M}_p$ and $f_{p'}(N)=\emptyset$. It is easy to check that $p'$ is a condition which extends $p$ and has the desired property.
\end{proof}
Given any $\gamma \in \omega_1$, by Claim \ref{extension},  the  set
$$D_{\gamma}= \{q \in \mathbb{P} \colon \exists M \in \mathcal{M}_q(\gamma < \delta_M)\}$$
 is dense  in $\mathbb{P}$. This implies that  $C$ is unbounded in $\omega_1$.

 Now we claim that for every $p \in \mathbb{P}$ and $\gamma \in \omega_1$, if $p$ forces $\gamma$ to be a limit point of $\dot{C}$, then $p$ also forces it is an element of $\dot{C}$. By contradiction suppose $p$ does not hold in the statement. Hence there is no $M \in \mathcal{M}_p$ with $\delta_{M}= \gamma$.
 Using Claim  \ref{extension} and the assumption that $p$ forces $\gamma$ is a limit point of $\dot{C}$,    by extending $p$ if necessary,  we can assume that
 for some $i$ with $i+1 < n_p$, $\delta_{M^p_i} < \gamma < \delta_{M^p_{i+1}}.$
  Let $\xi < \delta_{M^p_{i+1}}$ be any ordinal greater than $\gamma$. Set $q= \langle \mathcal{M}_q, f_q \rangle$ where $\mathcal{M}_q= \mathcal{M}_p$ and $f_q(M)=f_p(M)$ for all $M \neq {M^p_{i}}$, and $f_q({M^p_{i}})=f_p({M^p_{i}}) \cup \{\xi\}$. Now we have $q \leq p$ is a condition and  every extension of $q$ forces that $\gamma$ is not a limit point of $\dot{C}$, indeed for any extension $r$ of $q$, $r$ forces $\dot{C}$ has empty intersection with the interval $(\delta_{M^p_{i}}, \xi)$, in particular $r$ forces $\dot{C} \cap \gamma \subseteq \delta_{M^p_{i}}+1.$
\end{proof}
\begin{lemma}
$\mathbb{P}$ is strongly proper.
\end{lemma}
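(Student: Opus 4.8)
The plan is to prove the stronger statement that \emph{every} countable $N \prec H(\lambda)$ with $\mathbb{P} \in N$ (for $\lambda$ large enough) admits the required extensions, by writing down an explicit strongly generic condition. First I would observe that $M^* := N \cap H(\omega_1)$ is itself a countable elementary substructure of $H(\omega_1)$, so $M^* \in \mathcal{S}$; this is the standard fact that $N \cap H(\omega_1) \prec H(\omega_1)$, using that $H(\omega_1) \in N$. Since $p \in N$, each model $M^p_i$ and each entry of each $f_p(M^p_i)$ lies in $N \cap H(\omega_1) = M^*$, so $M^p_i \in M^*$ for all $i$ and $f_p(M^p_{n_p-1})$ is a finite subset of $M^*$. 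I would therefore define $q$ by placing $M^*$ on top of $p$: set $\mathcal{M}_q = \mathcal{M}_p \cup \{M^*\}$ (an $\in$-increasing sequence, as $M^*$ contains every $M^p_i$), $f_q \restriction \mathcal{M}_p = f_p$, and $f_q(M^*) = \emptyset$. Then $q \in \mathbb{P}$ and $q \leq p$.

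The heart of the argument is to check that $q$ is strongly $(N,\mathbb{P})$-generic. Fix a dense open $D \subseteq \mathbb{P} \cap N$ and an arbitrary $r \leq q$; I must produce $t \in D$ and a common extension $s$ of $r$ and $t$. The key device is a projection $r \restriction M^*$, defined by keeping exactly those models of $\mathcal{M}_r$ that lie in $M^*$ (equivalently, those $M$ with $\delta_M < \delta_{M^*}$) together with their $f_r$-values. Because $r \leq q$ and $M^* \in \mathcal{M}_q$, these are precisely the models of $\mathcal{M}_r$ occurring strictly below $M^*$, so $r \restriction M^*$ is a genuine condition. Since all its models and side-condition entries are elements of $M^*$, and $M^*$ is closed under finite sequences, we get $r \restriction M^* \in M^* \subseteq N$, and clearly $r \leq r \restriction M^*$.

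Next I would invoke density: as $r \restriction M^* \in \mathbb{P} \cap N$, choose $t \in D$ with $t \leq r \restriction M^*$; then $t \in N \cap H(\omega_1) = M^*$, so every model and every side-condition entry of $t$ lies in $M^*$. The final and most delicate step is the amalgamation $s := r \cup t$, meaning $\mathcal{M}_s = \mathcal{M}_r \cup \mathcal{M}_t$ and $f_s = f_r \cup f_t$ taken pointwise. I expect this to be the main obstacle, since it is where the side-condition bookkeeping must be verified. From $t \leq r \restriction M^*$ we have $\mathcal{M}_{r \restriction M^*} \subseteq \mathcal{M}_t$ and $f_r(M) \subseteq f_t(M)$ on those common models, so below $M^*$ the sequence $\mathcal{M}_s$ coincides with $\mathcal{M}_t$, while from $M^*$ upward it coincides with $\mathcal{M}_r$; as every model of $\mathcal{M}_t$ belongs to $M^*$, the combined sequence is still $\in$-increasing, with $M^*$ sitting exactly between the two parts.

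One then checks that for each non-top model the value of $f_s$ is a finite subset of the next model of $\mathcal{M}_s$: below $M^*$ this reduces to the corresponding property of $t$; at the top model of $\mathcal{M}_t$, whose successor in $\mathcal{M}_s$ is $M^*$, one uses that the $f_t$-values (and hence the $f_s$-value there) lie inside $M^*$; and from $M^*$ upward it reduces to the corresponding property of $r$. Hence $s \in \mathbb{P}$ with $s \leq r$ and $s \leq t$, which shows that $D$ is predense below $q$. As $D$ was an arbitrary dense open subset of $\mathbb{P} \cap N$, the condition $q$ is strongly $(N,\mathbb{P})$-generic, and since this works for every such $N$ the forcing $\mathbb{P}$ is strongly proper.
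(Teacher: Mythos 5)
Your proposal is correct and follows essentially the same route as the paper's own proof: you build the same strongly generic condition by placing $N \cap H(\omega_1)$ on top of $p$, project any extension $r$ to the part of its model sequence lying below $N \cap H(\omega_1)$ (which equals $\mathcal{M}_r \cap N$), and amalgamate a member of $D$ below that projection with $r$ itself. The only differences are cosmetic (naming, and that you spell out a few routine verifications, such as $f_p(M^p_{n_p-1}) \subseteq N \cap H(\omega_1)$, which the paper leaves implicit).
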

\begin{proof}
Suppose that $\lambda$ is a large enough regular cardinal, $N \prec H(\lambda)$ is countable with $\mathbb{P} \in N$, and $p \in \mathbb{P} \cap N$.
 Set $N'=N \cap H(\omega_1)$ and $p'=\langle \mathcal{M}_{p'}, f_{p'}  \rangle$, where
 $\mathcal{M}_{p'} = \mathcal{M}_p \cup \{N'\}$ and $f_{p'}(M) = f_p(M)$ for all $M \in \mathcal{M}_p$ and $f_{p'}(N')=\emptyset$.

We demonstrate that $p'$ serves as a strongly $(N, \mathbb{P})$-generic extension of $p$. It is clear that $p'$ is a condition which extends $p$. Now we argue that, if $q \leq p'$, then $q{\restriction}_N= \langle \mathcal{M}_q \cap N, f_q {\restriction}_{\mathcal{M}_q \cap N} \rangle \in \mathbb{P} \cap N$.
\begin{claim}
$q{\restriction}_N \in \mathbb{P} \cap N$.
\end{claim}
\begin{proof}[Proof of the Claim] First we show that $\mathcal{M}_q \cap N$ is exactly the initial segment of $\mathcal{M}_q$ before $N'$. Since the sequence of elements of $\mathcal{M}_q$ before $N'$ is a finite subset of $N' \subseteq N$, so it is also a finite subset of $N$. If there is $Q \in \mathcal{M}_q$ such that $N' \in Q$, then $Q \notin N$, since otherwise $Q \in N \cap H(\omega_1)=N'$ which gives a contradiction.
The result follows immediately.
\end{proof}
Now let $D \subseteq \mathbb{P} \cap N$ dense open and let $r \in D$ be an extension of $q{\restriction}_N$. We have to find a condition $s$ such that $s \leq r, q$.
\begin{claim}
Let $\mathcal{M}_s= \mathcal{M}_r \cup \mathcal{M}_q$, $f_s(M)=f_r(M)$ if $M \in \mathcal{M}_r$ and $f_s(M)= f_q(M)$ if $M \in \mathcal{M}_q \setminus \mathcal{M}_r$. Then $s$ is a common extension of $q$ and $r$.
\end{claim}
\begin{proof}[Proof of the Claim]  $\mathcal{M}_s$ is a finite increasing sequence of elements of $\mathcal{S}$, since $\mathcal{M}_q \cap N \subseteq \mathcal{M}_r \subseteq N'$ and $N' \in Q$ for every $Q \in \mathcal{M}_q \setminus \mathcal{M}_r$.
It is also easily seen that $f_s$ has the property that $f_s(M^s_i) \in M^s_{i+1}$ for every $i+1<n_s$. Thus $s$ is a condition, and it clearly extends both of $q$ and $r$.
\end{proof}
Therefore, $s$ is a witness for $D$ to be predense below $p'$, and $\mathbb{P}$ is strongly proper.
\end{proof}
\begin{lemma}
$\mathbb{P}$ is not  $\omega$-proper.
\end{lemma}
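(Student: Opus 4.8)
The plan is to argue by contradiction. Suppose $\mathbb{P}$ were $\omega$-proper, fix any $\omega$-tower $\mathcal{N}=\langle N_\xi : \xi\le\omega\rangle$ with $\mathbb{P}\in N_0$ and any $p\in\mathbb{P}\cap N_0$, and let $q\le p$ be an $(\mathcal{N},\mathbb{P})$-generic condition. The whole argument reduces to the following local claim, which I will call the \emph{height lemma}: whenever $q$ is $(N,\mathbb{P})$-generic for a countable $N\prec H(\lambda)$ with $\mathbb{P}\in N$, the finite sequence $\mathcal{M}_q$ must contain a model $M$ with $\delta_M=\delta_N$. Granting this, I apply it to each $N_n$ with $n<\omega$: since $N_n\in N_{n+1}$ forces $\delta_{N_n}<\delta_{N_{n+1}}$, the ordinals $\delta_{N_n}$ are pairwise distinct, so $\mathcal{M}_q$ would have to contain infinitely many distinct models $M_n$ with $\delta_{M_n}=\delta_{N_n}$, contradicting the finiteness of $\mathcal{M}_q$. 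Thus the real content is the height lemma, and everything else is bookkeeping.

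To prove the height lemma I would first show that $(N,\mathbb{P})$-genericity already forces $\delta_N\in\dot{C}$. For each $\gamma<\delta_N$ with $\gamma\in N$, the set $D_\gamma=\{s:\exists M\in\mathcal{M}_s,\ \gamma<\delta_M\}$ is dense by Claim \ref{extension} and lies in $N$ by elementarity. Genericity makes $D_\gamma\cap N$ predense below $q$; since every $s\in D_\gamma\cap N$ satisfies $\mathcal{M}_s\subseteq N$ (so all of its models have height $<\delta_N$) while contributing a model of height $>\gamma$, the condition $q$ forces $\dot{C}\cap(\gamma,\delta_N)\neq\emptyset$. As the relevant $\gamma$ are cofinal in $\delta_N$, $q$ forces $\delta_N$ to be a limit point of $\dot{C}$, and then the closure half of the club lemma gives $q\Vdash\delta_N\in\dot{C}$.

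The crux is the converse pressure supplied by the marker functions $f$. Assume toward a contradiction that no $M\in\mathcal{M}_q$ has height $\delta_N$. Let $M^-$ be the model of $\mathcal{M}_q$ of largest height below $\delta_N$; if there is none I first extend $q$ by inserting such a model beneath its current minimum, and if $M^-$ is the top model the argument only simplifies. By hypothesis the successor $M^{+}$ of $M^-$ in $\mathcal{M}_q$ (when it exists) has height strictly above $\delta_N$, so I may pick an ordinal $\xi$ with $\delta_N<\xi<\delta_{M^{+}}$ (or any $\xi>\delta_N$ if $M^-$ is topmost) and extend $q$ to $r'$ by adjoining $\xi$ to $f(M^-)$; this is legitimate exactly because $\xi$ then lies in the successor model. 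Precisely as in the closure argument of the club lemma, the marker $\xi\in f_{r'}(M^-)$ forces the first club point above $\delta_{M^-}$ to exceed $\xi$, so $r'\Vdash\dot{C}\cap(\delta_{M^-},\xi)=\emptyset$ and in particular $r'\Vdash\delta_N\notin\dot{C}$. Since $r'\le q$, this contradicts $q\Vdash\delta_N\in\dot{C}$ from the previous paragraph, completing the height lemma. I expect the main obstacle to be precisely this step: verifying that genericity forces a club point exactly at $\delta_N$ and then using the marker mechanism to manufacture a gap there when the matching model is absent; the edge cases (no model below $\delta_N$, or $M^-$ on top) and the bookkeeping about which $D_\gamma$ lie in $N$ are routine by comparison.
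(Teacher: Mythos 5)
Your route is genuinely different from the paper's, and its core is sound. The paper never proves your ``height lemma''; instead it forces $\delta_{N_i}\in\dot C$ for every $i\le\omega$, uses Claim \ref{extension} to put a model of $q'$ above $\delta_{N_\omega}$, observes that the finitely many models of $q'$ must leave a single gap $(\delta_{M^{q'}_i},\delta_{M^{q'}_{i+1}})$ containing cofinally many levels of the tower below $\delta_{N_\omega}$, and then kills one level $\delta_{N_m}$ with one marker. Your decomposition --- a local structural statement (any $(N,\mathbb{P})$-generic condition must carry a model of height exactly $\delta_N$) followed by pigeonhole against the finiteness of $\mathcal{M}_q$ --- is stronger and more reusable: it shows no condition can be generic for infinitely many models of pairwise distinct heights, tower structure aside. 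Your first step (genericity forces $\delta_N\in\dot C$, via the sets $D_\gamma\in N$ and predensity) and your marker argument in the cases where $\mathcal{M}_q$ has a model of height $<\delta_N$ (including the top-model case) are correct.

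However, the edge case you dismiss as routine hides a genuine gap. When every model of $\mathcal{M}_q$ has height above $\delta_N$, you propose to ``extend $q$ by inserting such a model beneath its current minimum.'' This is not in general possible: since the model sequence of a condition must be $\in$-increasing, the inserted model $M$ must satisfy $M\in M^q_0$, $M\in\mathcal{S}$ and $\delta_M<\delta_N$. An arbitrary $M^q_0\in\mathcal{S}$ need not contain \emph{any} member of $\mathcal{S}$ (by foundation, $\mathcal{S}$ has $\in$-minimal elements), and even when $\mathcal{S}\cap M^q_0\neq\emptyset$, nothing guarantees a member of height below $\delta_N$; elementarity of $M^q_0$ in $H(\omega_1)$ does not help, since $\mathcal{S}$ is not definable over $H(\omega_1)$. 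The repair must invoke genericity once more: for any $\gamma\in N\cap\omega_1$, predensity of $D_\gamma\cap N$ below $q$ gives some $s\in D_\gamma\cap N$ compatible with $q$, all of whose models lie in $N$ and hence have height $<\delta_N$; the ``minimal merge'' $t$ with $\mathcal{M}_t=\mathcal{M}_s\cup\mathcal{M}_q$ and $f_t=f_s\cup f_q$ is a condition (the same computation as in the paper's strong-properness proof, using that membership implies inclusion for countable elementary submodels), extends $q$, has a model below $\delta_N$, and still has none at height $\delta_N$, so your marker argument applies to $t$. Alternatively, and more cheaply: to refute $\omega$-properness you may choose $p\in N_0$ with $\mathcal{M}_p\neq\emptyset$, so that every $q\le p$ already contains a model of height $<\delta_{N_0}\le\delta_{N_n}$ and the bad case never arises. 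Note that the paper sidesteps this issue entirely by only ever adding models \emph{on top} of a condition, which is always possible.
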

\begin{proof}
Assume that $\mathcal{N}=\langle N_i \colon i \leq \omega \rangle$ is an $\omega$-tower of elements of $\mathcal{S}$ such that $\dot{C},\mathbb{P} \in N_0$, where $\dot{C}$ is the canonical name for the club $C$,  and let  $p \in \mathbb{P} \cap N_0$. We show that there is no extension  $q \leq p$ which is an $(\mathcal{N}, \mathbb{P})$-generic condition, meaning that $q$  is $(N_i, \mathbb{P})$-generic for every $i \leq \omega$. Suppose by the way of contradiction that there is such a condition $q$.  For each $i \leq \omega$, as  $q$ is  $(N_i, \mathbb{P})$-generic and
$\dot{C} \in N_i$, we have $q \Vdash$``$\dot{C} \cap \delta_{N_i}$ is unbounded in $\delta_{N_i}$''.  As $q \Vdash$``$\dot{C}$ is a club in $\omega_1$'',   hence
 $q \Vdash$``$ \delta_{N_i} \in \dot{C}$''.

 On the other hand, by claim \ref{extension} and since $\mathcal{M}_q$ is finite, there are $q' \leq q$, $n \in \omega$, and  $i+1 < n_{q'}$ such that $\delta_{N^{q'}_i} < \delta_{N_n} < \delta_{N_{\omega}} \leq \delta_{N^{q'}_{i+1}}$.
Since $f_{q'}(N^{q'}_i)$ is a finite subset of $N^{q'}_{i+1}$, there exist $m \in \omega$ and $\delta_{N_m} < \xi <\delta_{N^{q'}_{i+1}}$ such that $n < m$ and $\xi \notin f_{q'}(N^{q'}_i)$. Let $q''$ be such that $\mathcal{M}_{q''}= \mathcal{M}_{q'}$ and $f_{q''}(M)= f_{q'}(M)$ for all $M \neq N^{q'}_{i}$ and $f_{q''}(N^{q'}_{i})= f_{q'}(N^{q'}_{i}) \cup \{\xi\}$. As before, $q'' \in \mathbb{P}$ is  an extension of $q$ such that for all $r \leq q''(r \Vdash `` \delta_{N_m} \notin \dot{C}$''$)$ which leads us to a contradiction.
\end{proof}
\section{General case}\label{3}
In this section, we present a new proof for a theorem by Shelah, which states that for every indecomposable countable ordinal $\alpha$, there exists a forcing notion $\mathbb{P}[\alpha]$, such that $\mathbb{P}[\alpha]$ is $\beta$-proper for every $\beta < \alpha$, but is not $\alpha$-proper.
\begin{definition}
Let $\alpha < \omega_1$ be an indecomposable ordinal. The forcing notion $\mathbb{P}[\alpha]$ consists of conditions $p=\langle \mathcal{M}_p, f_p, \mathcal{W}_p \rangle$ such that:
\begin{itemize}
\item
$\mathcal{M}_p=\langle M^p_{\xi} \colon \xi \leq \gamma_p \rangle$ is an $\in$-increasing sequence of elements of $\mathcal{S}$ for some $\gamma_p < \alpha$ which is continuous at limits;
\item
the function $f_p: \mathcal{M}_p \longrightarrow H(\omega_1)$ is defined such that $f_p(M^p_{\xi})$ is a finite subset of $M^p_{\xi+1}$ for $\xi < \gamma$, and $f_p(M^p_{\gamma})$ is a finite subset of $H(\omega_1)$; and
\item
the witness $\mathcal{W}_p$ is a subset of $\mathcal{M}_p,$ such that for every $N \in \mathcal{W}_p$, $p{\restriction}_N= \langle \mathcal{M}_p \cap N, f_p{\restriction}_{\mathcal{M}_p \cap N}, \mathcal{W}_p \cap N \rangle \in N$.
\end{itemize}
For $p, q \in \mathbb{P}[\alpha]$, we say $q \leq p$ if and only if $\mathcal{M}_p \subseteq \mathcal{M}_q$, $f_p(M) \subseteq f_q(M)$ for every $M \in \mathcal{M}_p$ and $\mathcal{W}_p \subseteq \mathcal{W}_q$.
\end{definition}
\begin{lemma}
If $G \subseteq \mathbb{P}[\alpha]$ is a generic filter, then $C=\{\delta_M \colon M \in \mathcal{M}_p \text{ for some } p \in G\}$ is a club.
\end{lemma}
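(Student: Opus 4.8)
The plan is to mirror the two-part structure of the corresponding lemma for $\mathbb{P}$: first show that $C$ is unbounded in $\omega_1$ via a density argument, and then that $C$ is closed via a ``blocking'' argument, now adapted to the continuity requirement on $\mathcal{M}_p$ and to the presence of the witness component $\mathcal{W}_p$.

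For unboundedness I would establish the exact analogue of Claim~\ref{extension}: given $p \in \mathbb{P}[\alpha]$ and $\gamma < \omega_1$, pick $N \prec H(\omega_1)$ with $p,\gamma \in N$ and set $\mathcal{M}_{p'} = \mathcal{M}_p {}^\frown \langle N \rangle$, $f_{p'} = f_p \cup \{(N,\emptyset)\}$ and $\mathcal{W}_{p'} = \mathcal{W}_p$. Then $\gamma_{p'} = \gamma_p + 1 < \alpha$ by additive indecomposability of $\alpha$ (as $\gamma_p, 1 < \alpha$), the new model sits at a successor index so continuity at limits is inherited from $p$, and $M^p_{\gamma_p} \in N$ keeps the sequence $\in$-increasing. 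Moreover for each $M \in \mathcal{W}_{p'} = \mathcal{W}_p$ we have $N \notin M$ (since $M \in \mathcal{M}_p \subseteq N$), whence $p'{\restriction}_M = p{\restriction}_M \in M$, so $p'$ is a genuine condition with $\gamma < \delta_N$. Thus each $D_\gamma = \{q : \exists M \in \mathcal{M}_q,\ \gamma < \delta_M\}$ is dense, and $C$ is unbounded.

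For closedness, suppose toward a contradiction that some $p$ forces $\gamma$ to be a limit point of $\dot C$ yet $p \not\Vdash \gamma \in \dot C$, and fix $p_1 \leq p$ forcing $\gamma \notin \dot C$; in particular no $M \in \mathcal{M}_{p_1}$ has $\delta_M = \gamma$. Since $p_1 \Vdash \dot C \cap \gamma$ is unbounded in $\gamma$, I can extend $p_1$ to a condition carrying a model with $\delta < \gamma$, and extend again (as in the density claim) to also carry a model with $\delta > \gamma$; call the result $p_2 \leq p_1$, which still has no model with $\delta = \gamma$. Let $\xi^\ast$ be the least index of $\mathcal{M}_{p_2}$ whose model has $\delta$-value exceeding $\gamma$; this exists because some model lies above $\gamma$, and $\xi^\ast > 0$ because some model lies below $\gamma$. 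The key point is that continuity forces $\xi^\ast$ to be a successor, say $\xi^\ast = \zeta + 1$: were $\xi^\ast$ a limit, then $\delta_{M^{p_2}_{\xi^\ast}} = \sup_{\eta < \xi^\ast} \delta_{M^{p_2}_\eta} > \gamma$ would already place an earlier model above $\gamma$, contradicting minimality. Hence $\delta_{M^{p_2}_\zeta} < \gamma < \delta_{M^{p_2}_{\zeta+1}}$, the left inequality strict because no model sits exactly at $\gamma$.

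Finally I would block: choose $\xi$ with $\gamma < \xi < \delta_{M^{p_2}_{\zeta+1}}$ and define $q$ by keeping $\mathcal{M}_q = \mathcal{M}_{p_2}$ and $\mathcal{W}_q = \mathcal{W}_{p_2}$, and setting $f_q = f_{p_2}$ except $f_q(M^{p_2}_\zeta) = f_{p_2}(M^{p_2}_\zeta) \cup \{\xi\}$. Exactly as in the finite-conditions case, any $r \leq q$ has $\xi \in f_r(M^{p_2}_\zeta)$, which must be a subset of the $\mathcal{M}_r$-successor of $M^{p_2}_\zeta$, so that successor has $\delta > \xi > \gamma$; hence $r$ forces $\dot C \cap (\delta_{M^{p_2}_\zeta}, \gamma] = \emptyset$, contradicting that $\gamma$ is a limit point. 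The step I expect to require the most care is verifying that $q$ is a legitimate condition, and in particular that the witness clause survives the enlargement of $f(M^{p_2}_\zeta)$. This reduces to the observation that $\xi < \delta_{M^{p_2}_{\zeta+1}}$, so $\xi \in N$ for every $N \in \mathcal{W}_q$ with $M^{p_2}_\zeta \in N$ (such an $N$ has $\delta_N \geq \delta_{M^{p_2}_{\zeta+1}} > \xi$); consequently $q{\restriction}_N$ differs from $p_2{\restriction}_N \in N$ only by adjoining the element $\xi \in N$ to a single finite set, and therefore remains in $N$.
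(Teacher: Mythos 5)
Your proposal is correct and follows essentially the same route as the paper's proof: a density argument (the analogue of Claim~\ref{extension}) for unboundedness, and for closure a blocking argument that adjoins an ordinal $\xi$ with $\gamma < \xi < \delta_{M_{\zeta+1}}$ to $f(M_{\zeta})$ at the successor gap straddling $\gamma$, including the same key verification that the witness clause survives (namely $\xi \in N$ for every $N \in \mathcal{W}$ with $M_{\zeta} \in N$). If anything you are slightly more careful than the paper: you first use the limit-point hypothesis to extend the condition so that some model has $\delta$-value below $\gamma$ before locating the gap, whereas the paper sets $\delta = \sup\{\delta_{M_{\xi}} \colon \delta_{M_{\xi}} < \gamma\}$ and asserts this supremum is attained, tacitly assuming that set is nonempty.
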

\begin{proof}
Let $\gamma \in \omega_1$. Similar to the proof of claim \ref{extension}, for every $p \in \mathbb{P}[\alpha]$ we can find $q \leq p$ such that for some  $N \in \mathcal{M}_q$, $\gamma < \delta_N$, so the set $D_{\gamma}= \{q \in \mathbb{P}[\alpha] \colon \exists \xi \leq \gamma_q(\gamma \leq \delta_{M_{\xi}})\}$ is dense open, which guaranties that $C$ is unbounded in $\omega_1$.

Now assume that $p \Vdash$``$\gamma \notin \dot{C}$''. We show that $p$ forces that $\gamma$ can not be a limit point of $C$. For simplicity write
$M_\eta=M^p_\eta$, for all $\eta \leq \gamma_p.$
First, by extending $p$ let us assume that  $\gamma < \delta_{M_{\gamma_p}}$.
Let
$$ \delta = \sup\{\delta_{M_{\xi}} \colon \xi \leq \gamma_p \land \delta_{M_{\xi}} < \gamma \}.$$
 As $\mathcal{M}_p$ is continuous, there exists $\eta < \gamma_p$ such that $\delta_{M_{\eta}}= \delta$. By the  assumption $p \Vdash$``$\gamma \notin \dot{C}$'', $\delta < \gamma < \delta_{M_{\eta+1}}$, hence there is $\zeta \in M_{\eta+1}$ such that $\gamma < \zeta$. Let $q$ be such that, $\mathcal{M}_q=\mathcal{M}_p$, $f_q(M_{\xi})=f_p(M_{\xi})$ for all ${\xi} \neq {\eta}$, $f_q(M_{\eta})=f_p(M_{\eta}) \cup \{\zeta\}$, and $\mathcal{W}_q= \mathcal{W}_p$. It is easily seen that $q$ is a condition, the main point is that $\mathcal{W}_q \subseteq \mathcal{M}_q=\mathcal{M}_p$, hence $\zeta \in N$ for all $N \in \mathcal{W}_q$ with $M_\eta \in N,$ in particular, $q{\restriction}_N \in N$ for  all $N \in \mathcal{W}_q$. Clearly $q$ extends $p$, and every condition extending $q$ forces that ``$\dot{C} \cap (\delta_{M_{\eta}}, \zeta] = \emptyset$'' which guaranties that $\gamma$ can not be a limit point of $C$.
\end{proof}
\begin{theorem}\label{main}
$\mathbb{P}[\alpha]$ is $\beta$-proper for every $\beta < \alpha$, but is not $\alpha$-proper.
\end{theorem}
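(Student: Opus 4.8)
The plan is to prove the two halves separately. For the positive part I will establish the stronger statement promised in the introduction, which immediately yields $\beta$-properness for every $\beta<\alpha$. Fix $\beta<\alpha$, a $\beta$-tower $\mathcal{N}=\langle N_\xi:\xi\le\beta\rangle$ with $\mathbb{P}[\alpha]\in N_0$, and $p\in\mathbb{P}[\alpha]\cap N_0$. I would construct a single condition $q\le p$ by appending the traces of the tower on $H(\omega_1)$ to $p$: put $\mathcal{M}_q=\mathcal{M}_p{}^\frown\langle N_\xi\cap H(\omega_1):\xi\le\beta\rangle$ (reindexed as $\langle M^q_\zeta:\zeta\le\gamma_q\rangle$), let $f_q$ agree with $f_p$ on $\mathcal{M}_p$ and be empty on the new models, and set $\mathcal{W}_q=\mathcal{W}_p\cup\{N_\xi\cap H(\omega_1):\xi\le\beta,\ \xi\text{ not a limit}\}$. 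The only place the hypothesis on $\alpha$ enters the construction is the length bound: since $\gamma_p<\alpha$ and $\beta<\alpha$ and $\alpha$ is additively indecomposable, $\gamma_q=(\gamma_p+1)+\beta<\alpha$, so $\mathcal{M}_q$ is a legitimate sequence.

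Checking that $q$ is a condition is the first batch of routine verifications. The $\in$-increasing property holds because every $M^p_\zeta\in N_0\cap H(\omega_1)$ and $N_\xi\cap H(\omega_1)\in N_{\xi+1}\cap H(\omega_1)$; continuity at a limit index $(\gamma_p+1)+\delta$ follows from $N_\delta=\bigcup_{\eta<\delta}N_\eta$ together with the fact that each $M^p_\zeta\subseteq N_0\cap H(\omega_1)\subseteq N_\eta\cap H(\omega_1)$, so the union of the earlier models is exactly $N_\delta\cap H(\omega_1)$. The witness clauses for the newly added successor models, and the preservation of the old witness clauses of $\mathcal{W}_p$, both reduce to the tower axiom $\langle N_\zeta:\zeta\le\eta\rangle\in N_{\eta+1}$, which lets one compute $q{\restriction}_{N_{\eta+1}\cap H(\omega_1)}$ inside $N_{\eta+1}$. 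Note that the limit models are deliberately excluded from $\mathcal{W}_q$, since $q{\restriction}_{N_\delta\cap H(\omega_1)}$ codes the whole sequence below $\delta$ and cannot lie in $N_\delta$.

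The heart of the argument is showing that $q$ is strongly $(N_{\eta+1},\mathbb{P}[\alpha])$-generic at every successor index and $(N_\delta,\mathbb{P}[\alpha])$-generic at every limit. For a successor $\xi=\eta+1$ (and for $\xi=0$) the point is that $N_\xi\cap H(\omega_1)\in\mathcal{W}_q\subseteq\mathcal{W}_{q'}$ for every $q'\le q$, so the witness clause gives $q'{\restriction}_{N_\xi}:=q'{\restriction}_{N_\xi\cap H(\omega_1)}\in N_\xi\cap H(\omega_1)\subseteq N_\xi$; thus $q'\mapsto q'{\restriction}_{N_\xi}$ is a genuine restriction into $\mathbb{P}[\alpha]\cap N_\xi$. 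Strong genericity then follows from an amalgamation lemma: given $q'\le q$ and $r\le q'{\restriction}_{N_\xi}$ with $r\in N_\xi$, the triple $s$ with $\mathcal{M}_s=\mathcal{M}_r{}^\frown(\mathcal{M}_{q'}\text{ from }N_\xi\cap H(\omega_1)\text{ upward})$, $f_s=f_r\cup f_{q'}$ and $\mathcal{W}_s=\mathcal{W}_r\cup\mathcal{W}_{q'}$ is a common extension of $r$ and $q'$. This is where I expect the main difficulty: verifying that $\mathcal{M}_s$ is $\in$-increasing, continuous, and of length $<\alpha$, and that the witness condition survives amalgamation. The key geometric facts are that all models of $r$ are elements of $N_\xi\cap H(\omega_1)$ (hence sit below it), that $N_\xi\cap H(\omega_1)$ occupies a successor slot so no continuity constraint is violated at the splice, that lengths stay below $\alpha$ by indecomposability, and that for $N\in\mathcal{W}_{q'}$ lying above $N_\xi\cap H(\omega_1)$ one has that $s{\restriction}_N$ is the amalgam of $r$ and $q'{\restriction}_N$, computable inside $N$. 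The limit case needs no new idea: if $D\in N_\delta$ is dense open then $D\in N_{\eta+1}$ for some $\eta+1<\delta$, and strong $(N_{\eta+1},\mathbb{P}[\alpha])$-genericity makes $D\cap N_{\eta+1}\subseteq D\cap N_\delta$ predense below $q$.

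For the negative part, I fix an $\alpha$-tower $\mathcal{N}=\langle N_\xi:\xi\le\alpha\rangle$ with $\mathbb{P}[\alpha],\dot C\in N_0$ and any $p\in\mathbb{P}[\alpha]\cap N_0$, and suppose toward a contradiction that some $q\le p$ is $(\mathcal{N},\mathbb{P}[\alpha])$-generic. Exactly as in the proof for $\mathbb{P}$ in Section~\ref{2}, $(N_\xi,\mathbb{P}[\alpha])$-genericity together with $\dot C\in N_\xi$ forces $\dot C\cap\delta_{N_\xi}$ to be unbounded in $\delta_{N_\xi}$, and since $\dot C$ is forced to be a club we get $q\Vdash\delta_{N_\xi}\in\dot C$ for every $\xi\le\alpha$. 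Using density I extend $q$ to $q'$ with $\delta_{N_\alpha}<\delta_{M^{q'}_{\gamma_{q'}}}$. Now the gap-creating move from the club lemma shows that if some $\delta_{N_\xi}$ with $\xi<\alpha$ were not equal to any $\delta_{M^{q'}_i}$, then it lies strictly between two consecutive models of $q'$ and $q'$ could be extended to force $\delta_{N_\xi}\notin\dot C$, contradicting $q'\le q$. Hence $\{\delta_{N_\xi}:\xi<\alpha\}\subseteq\{\delta_{M^{q'}_i}:i\le\gamma_{q'}\}$. But the left-hand set has order type $\alpha$ while the right-hand set has order type at most $\gamma_{q'}+1<\alpha$, and a set of ordinals of order type $\alpha$ cannot be a subset of one of order type $<\alpha$. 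This contradiction shows that $\mathbb{P}[\alpha]$ is not $\alpha$-proper.
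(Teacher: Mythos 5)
Your proposal is correct. The positive half is essentially the paper's own argument: you build the same condition (the tower models appended to $\mathcal{M}_p$, the non-limit models placed in the witness set, the limit models deliberately excluded), you use the witness clause in exactly the paper's way to see that $q'\mapsto q'{\restriction}_{N_\xi}$ is a genuine map into $\mathbb{P}[\alpha]\cap N_\xi$ for every $q'\le q$, and your amalgamation lemma is the paper's Claim \ref{sN_0}. Your cosmetic deviations are harmless: you intersect the tower models with $H(\omega_1)$ (which the paper itself does in Section \ref{2}, and which is in fact the more careful reading of the definition of a tower), and you take empty $f$-values on the new models where the paper takes a nonempty value whose specific choice is never exploited afterwards. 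Your limit-stage argument (a dense open $D\in N_\delta$ lies in some $N_{\eta+1}$ with $\eta+1<\delta$, and $D\cap N_{\eta+1}\subseteq D\cap N_\delta$ is predense below the condition by strong genericity) is a standard equivalent of the paper's argument via names of ordinals.

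The negative half is where you genuinely diverge, and your route is simpler. The paper uses indecomposability of $\alpha$ to find a window $[\delta_{N_\zeta},\delta_{N_{\zeta+2}})$ disjoint from $\{\delta_M\colon M\in\mathcal{M}_q\}$, then inserts a brand-new model $\tilde{M}$ with $\delta_{\tilde{M}}=\delta_{N_\zeta}$ between two consecutive models of $q$ (its existence extracted from $q\Vdash\delta_{N_\zeta}\in\dot{C}$), and uses the $f$-value $\{\delta_{N_{\zeta+1}}+1\}$ at $\tilde{M}$ to force $\delta_{N_{\zeta+1}}\notin\dot{C}$. You instead apply the gap-creating move from the club lemma pointwise to existing consecutive models of $q'$: any $\delta_{N_\xi}$ not occurring among the $\delta_{M^{q'}_i}$ could be forced out of $\dot{C}$, so $\{\delta_{N_\xi}\colon\xi<\alpha\}\subseteq\{\delta_{M^{q'}_i}\colon i\le\gamma_{q'}\}$, and order-type monotonicity gives $\alpha\le\gamma_{q'}+1<\alpha$. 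This buys two things: you never need to locate a special window (at this point your argument only uses that $\alpha$ is a limit ordinal, with indecomposability reserved for where it is truly needed, namely the length bounds in the amalgamation arguments), and you never need to insert the auxiliary model $\tilde{M}$, whose compatibility with $q$ the paper must justify. In exchange, the paper's version exhibits one explicit ordinal $\delta_{N_{\zeta+1}}$ witnessing the failure, while yours ends in a counting contradiction. If you write your version up, the one verification to make explicit is legality of the gap move: the ordinal $\zeta\in M^{q'}_{\eta+1}$ added to $f_{q'}(M^{q'}_\eta)$ lies in every $N\in\mathcal{W}_{q'}$ containing $M^{q'}_\eta$, since any such $N$ sits at or above $M^{q'}_{\eta+1}$ in the chain; this is the same witness check carried out in the club lemma, and it is what makes your extension a condition.
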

The proof of theorem \ref{main} will consist of a series of lemmas. In fact, for every $\beta < \alpha$, and every $\beta$-tower $\mathcal{N}= \langle N_\zeta: \zeta \leq \beta \rangle$ with $\mathbb{P}[\alpha] \in N_0$, and every condition $p \in \mathbb{P}[\alpha] \cap N_0$, we will find a condition $p' \leq p$ such that:
\begin{enumerate}
\item[$(\ast)^{\beta, \mathcal{N}}_{p'}$:] for every $\zeta \leq \beta$,
\begin{enumerate}

\item  if $\zeta$ is a successor ordinal, then $p'$ is a strongly $(N_{\zeta}, \mathbb{P}[\alpha])$-generic condition, and

\item if $\zeta$ is a limit ordinal, then $p'$ is an $(N_{\zeta}, \mathbb{P}[\alpha])$-generic condition.

 \end{enumerate}
\end{enumerate}
 Also we show that for any $\alpha$-tower $\mathcal{N}$ with $\mathbb{P}[\alpha], \dot{C} \in N_0$, no condition $q \in \mathbb{P}[\alpha]$ is an $(\mathcal{N}, \mathbb{P}[\alpha])$-generic condition.

First, let us show that $\mathbb{P}[\alpha]$ is $\beta$-proper for every $\beta < \alpha$.
Fix an arbitrary $\beta < \alpha$ and let  $\mathcal{N}=\langle N_{\zeta} \colon \zeta \leq \beta \rangle$ be a $\beta$-tower of elements of $\mathcal{S}$ such that $\mathbb{P}[\alpha]  \in N_0$. Let $p \in \mathbb{P}[\alpha] \cap N_0$. Set $p'= \langle \mathcal{M}_{p'}, f_{p'}, \mathcal{W}_{p'} \rangle$, where:
 \begin{enumerate}
  \item $\mathcal{M}_{p'}= \mathcal{M}_p \cup \mathcal{N}$,
  \item $f_{p'}(M^{p'}_{\xi})=f_{p'}(M^p_{\xi})=f_p(M^p_{\xi})$ for $\xi \leq \gamma_p$,
  \item $f_{p'}(M^{p'}_{\gamma_p+1+\zeta})=f_{p'}(N_{\zeta})= \delta_{N_{\zeta}}+1$, for $\zeta \leq \beta$, and
  \item $\mathcal{W}_{p'}= \mathcal{W}_p \cup \{N_{\zeta} \in \mathcal{N} \colon \zeta \text{ is not a limit ordinal}\}$.
 \end{enumerate}
  We assert that $p'$ witnesses $(\ast)^{\beta, \mathcal{N}}_{p'}$ holds. By the construction, it is clear that $p' \leq p$, but we have to show that $p' \in \mathbb{P}[\alpha]$.
\begin{lemma}
$p'$ is a condition.
\end{lemma}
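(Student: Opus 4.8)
The plan is to check the three defining clauses of $\mathbb{P}[\alpha]$ for the triple $p'=\langle\mathcal{M}_{p'},f_{p'},\mathcal{W}_{p'}\rangle$: that $\mathcal{M}_{p'}$ is an $\in$-increasing sequence, continuous at limits, of length some $\gamma_{p'}<\alpha$; that $f_{p'}$ assigns to each model a finite subset of the next one; and that $\mathcal{W}_{p'}$ is a genuine witness set. Two facts about the tower will be used repeatedly: if $X\in N_0$ is countable then $X\subseteq N_0$ (and likewise for each $N_\zeta$), and the coherence clause $\langle N_\iota:\iota\le\eta\rangle\in N_{\eta+1}$. Since $\mathcal{M}_p\in N_0$, each $M^p_\xi\in N_0$, so $\bigcup_{\xi\le\gamma_p}M^p_\xi\subseteq N_0$; this absorption is the technical engine of the whole argument.

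For the $\mathcal{M}$-clause, note first that the length is $\gamma_{p'}=\gamma_p+1+\beta$, which is $<\alpha$ because $\gamma_p,1,\beta<\alpha$ and $\alpha$ is closed under ordinal addition. The sequence is $\in$-increasing: $\mathcal{M}_p$ and $\mathcal{N}$ each are, and at the junction $M^p_{\gamma_p}\in N_0=M^{p'}_{\gamma_p+1}$ since $\mathcal{M}_p\in N_0$. For continuity I would check the new limit stages, which are exactly the indices $\gamma_p+1+\zeta$ with $\zeta\le\beta$ a limit (these equal $\gamma_p+\zeta$, where $M^{p'}_{\gamma_p+1+\zeta}=N_\zeta$): here $\bigcup_{\iota<\gamma_p+1+\zeta}M^{p'}_\iota=\bigl(\bigcup_{\xi\le\gamma_p}M^p_\xi\bigr)\cup\bigl(\bigcup_{\eta<\zeta}N_\eta\bigr)$, and the first union is $\subseteq N_0\subseteq N_\zeta=\bigcup_{\eta<\zeta}N_\eta$ by absorption, so the total collapses to $N_\zeta$ as required; limit stages $\le\gamma_p$ inherit continuity from $\mathcal{M}_p$.

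For the $f$-clause, the values on $\mathcal{M}_p$ below $\gamma_p$ are inherited from $p$, while $f_{p'}(M^p_{\gamma_p})=f_p(M^p_{\gamma_p})$ is finite and lies in $N_0=M^{p'}_{\gamma_p+1}$ (as $f_p\in N_0$), hence is a finite subset of $N_0$; and for $\zeta<\beta$ the value $f_{p'}(N_\zeta)$ is a finite subset of $N_{\zeta+1}=M^{p'}_{(\gamma_p+1+\zeta)+1}$, using that $\delta_{N_\zeta}\in N_{\zeta+1}$ (which holds as $N_\zeta\in N_{\zeta+1}$), while at the top index $\zeta=\beta$ one only needs a finite subset of $H(\omega_1)$.

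The witness clause is where the real work lies: I must show $p'{\restriction}_N\in N$ for each $N\in\mathcal{W}_{p'}$. For $N\in\mathcal{W}_p$ I would first note that no tower model belongs to $N$ — since $N\in N_0\subseteq N_\eta$ gives $N\in N_\eta$, the relation $N_\eta\in N$ would violate foundation (the case $\eta=0$ being immediate from $N\in N_0$) — so $\mathcal{M}_{p'}\cap N=\mathcal{M}_p\cap N$ and $\mathcal{W}_{p'}\cap N=\mathcal{W}_p\cap N$, giving $p'{\restriction}_N=p{\restriction}_N\in N$ because $p$ is a condition. For $N=N_\zeta$ with $\zeta=\eta_0+1$ a successor, I would identify $\mathcal{M}_{p'}\cap N_\zeta$ as exactly $\mathcal{M}_p$ followed by $\langle N_\iota:\iota\le\eta_0\rangle$ (foundation excludes the later tower models and absorption puts $\mathcal{M}_p$ inside $N_0\subseteq N_\zeta$), and then observe that this sequence, the corresponding restriction of $f_{p'}$, and $\mathcal{W}_{p'}\cap N_\zeta$ are all definable inside $N_\zeta$ from the parameters $p\in N_0$ and $\langle N_\iota:\iota\le\eta_0\rangle\in N_{\eta_0+1}=N_\zeta$; since $N_\zeta$ is closed under forming the triple, $p'{\restriction}_{N_\zeta}\in N_\zeta$. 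I expect the successor case of this clause to be the main obstacle, as it is the only place where the tower coherence clause is genuinely needed and where one must pin down $\mathcal{M}_{p'}\cap N_\zeta$ precisely; the continuity computation of the second step is the other delicate point, being exactly where transfinite models and the absorption $\mathcal{M}_p\subseteq N_0$ come into play.
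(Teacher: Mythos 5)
Your proposal is correct and takes essentially the same approach as the paper's own proof: a direct verification of the three clauses, using indecomposability of $\alpha$ for the length bound, absorption of the countable set $\mathcal{M}_p\in N_0$ into $N_0$ at the junction, a foundation argument to pin down $\mathcal{M}_{p'}\cap N$, and the tower coherence clause $\langle N_\iota:\iota\le\eta_0\rangle\in N_{\eta_0+1}$ for the successor witnesses (indeed you make explicit several points the paper leaves implicit). The only omission is the case $N=N_0\in\mathcal{W}_{p'}$ (note $0$ is a non-limit ordinal), which your own tools dispose of immediately: by foundation no $N_\eta$ lies in $N_0$ and by absorption $\mathcal{M}_p\subseteq N_0$, so $p'{\restriction}_{N_0}=p\in N_0$, exactly as the paper notes.
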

\begin{proof}
Since $p \in N_0$, so in particular $M_{\gamma_p} \in N_0$ and $f_p(M^p_{\gamma_p}) \in N_0$. Hence $\mathcal{M}_{p'}$ is an $\in$-increasing continuous sequence of elements of $\mathcal{S}$ of length $\gamma_p+\beta$. As $\alpha$ is indecomposable and  $\gamma_p$ and $\beta$ are less than $\alpha$, the length  of $\mathcal{M}_{p'}$ which is equal to $\gamma_p+\beta$ is also less than $\alpha$. Also by the construction, $f_{p'}(N_{\zeta}) \in N_{\zeta +1}$ for every $N_{\zeta} \in \mathcal{N}$.

It is enough to show that $\mathcal{W}_{p'}$ satisfies the desired requirement. Let $Q \in \mathcal{W}_{p'}$. If $Q \in \mathcal{W}_p$, then since $\mathcal{W}_p \in N_0$, we have $p'{\restriction}_Q=p{\restriction}_Q \in Q$. Now let $Q \in \mathcal{W}_{p'}\setminus \mathcal{W}_p$. Thus either $Q=N_0$ or  $Q=N_{\zeta +1}$ for some $\zeta < \beta$. If $Q=N_0$, then $p'{\restriction}_{N_0}=p \in N_0$ and we are done. Now assume that $Q=N_{\zeta+1}$, for some $\zeta < \beta$. In this case, $\mathcal{M}_{p'} \cap N_{\zeta +1}$ is the initial segment of $\mathcal{M}_{p'}$ with the last element $N_{\zeta}$, which belongs to $N_{\zeta +1}$. Also for all of the $Q' \in \mathcal{M}_{p'} \cap N_{\zeta +1}$, $f_{p'}(Q') \in N_{\zeta + 1}$, since $\mathcal{M}_{p'}$ is an $\in$-increasing continuous sequence of countable elementary substructures. As $\mathcal{W}_p \in N_0$, and $\mathcal{N}$ is an $\in$-increasing continuous sequence of countable models, so $\mathcal{W}_{p'} \cap N_{\zeta+1} \in N_{\zeta + 1}$.
\end{proof}
\begin{lemma}
$p'$ is strongly $(N_{\zeta}, \mathbb{P}[\alpha])$-generic condition for every non-limit ordinal $\zeta \leq \beta$.
\end{lemma}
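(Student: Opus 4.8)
The plan is to adapt the strong-properness argument from Section~\ref{2} to $\mathbb{P}[\alpha]$, with the role of the added model $N'$ now played by $N_\zeta$ and with the witness $\mathcal{W}_{p'}$ doing the work of keeping the relevant restriction inside $N_\zeta$. Fix a non-limit $\zeta\le\beta$. Since $\zeta$ is not a limit, by construction $N_\zeta\in\mathcal{W}_{p'}$, and hence $N_\zeta\in\mathcal{W}_q$ for every $q\le p'$, because $\mathcal{W}_{p'}\subseteq\mathcal{W}_q$. I will show that for every such $q$ the restriction $q{\restriction}_{N_\zeta}$ is a condition in $\mathbb{P}[\alpha]\cap N_\zeta$, and that every $r\le q{\restriction}_{N_\zeta}$ is compatible with $q$. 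Predensity below $p'$ of an arbitrary dense open $D\subseteq\mathbb{P}[\alpha]\cap N_\zeta$ then follows: given $q\le p'$, density supplies $r\in D$ with $r\le q{\restriction}_{N_\zeta}$, and the common extension of $q$ and $r$ witnesses that $q$ meets $D$. This is exactly strong $(N_\zeta,\mathbb{P}[\alpha])$-genericity.

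For the first part, the membership $q{\restriction}_{N_\zeta}\in N_\zeta$ is immediate from the defining property of the witness, as $N_\zeta\in\mathcal{W}_q$; this is precisely where non-limitness of $\zeta$ is used. It remains to see that $q{\restriction}_{N_\zeta}$ is genuinely a condition. As in Section~\ref{2}, since $\mathcal{M}_q$ is $\in$-increasing, $\mathcal{M}_q\cap N_\zeta$ is an initial segment of $\mathcal{M}_q$: every model of $\mathcal{M}_q$ preceding $N_\zeta$ lies in $N_\zeta$, while no model of $\mathcal{M}_q$ equal to or containing $N_\zeta$ can belong to $N_\zeta$, by foundation. The one genuinely new point is that this initial segment must have a largest element, so that $q{\restriction}_{N_\zeta}$ has the required form; this holds because $N_\zeta$ cannot occur at a limit index of $\mathcal{M}_q$. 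Indeed, at a limit index continuity would give $\bigcup(\mathcal{M}_q\cap N_\zeta)=N_\zeta$, while $\mathcal{M}_q\cap N_\zeta\in N_\zeta$ (being the first coordinate of $q{\restriction}_{N_\zeta}\in N_\zeta$) forces $\bigcup(\mathcal{M}_q\cap N_\zeta)\in N_\zeta$ by elementarity, yielding the absurdity $N_\zeta\in N_\zeta$. The remaining requirements on $f_q{\restriction}_{\mathcal{M}_q\cap N_\zeta}$ and on $\mathcal{W}_q\cap N_\zeta$ are then routine.

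For compatibility, given $r\le q{\restriction}_{N_\zeta}$ I amalgamate exactly as in Section~\ref{2}: set $\mathcal{M}_s=\mathcal{M}_r\cup\mathcal{M}_q$, let $f_s$ agree with $f_r$ on $\mathcal{M}_r$ and with $f_q$ on $\mathcal{M}_q\setminus\mathcal{M}_r$, and put $\mathcal{W}_s=\mathcal{W}_r\cup\mathcal{W}_q$. Because $r\in N_\zeta$, all models of $\mathcal{M}_r$ lie in $N_\zeta$, while all models of $\mathcal{M}_q\setminus\mathcal{M}_r$ contain $N_\zeta$, and $\mathcal{M}_q\cap N_\zeta\subseteq\mathcal{M}_r$ by $r\le q{\restriction}_{N_\zeta}$; hence $\mathcal{M}_s$ is again $\in$-increasing, with $N_\zeta$ sitting immediately after the top model of $\mathcal{M}_r$, which also secures continuity at that junction. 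Checking the growth condition on $f_s$ and that $s$ extends both $q$ and $r$ is direct.

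The step I expect to be the main obstacle is verifying the witness condition for $s$, i.e. that $s{\restriction}_N\in N$ for every $N\in\mathcal{W}_s$. For $N\in\mathcal{W}_r$, so $N\in N_\zeta$, one checks that the models and witness elements contributed by $\mathcal{M}_q\setminus\mathcal{M}_r$ do not belong to $N$, whence $s{\restriction}_N=r{\restriction}_N\in N$ by the witness property of $r$. For $N\in\mathcal{W}_q$ with $N_\zeta\subseteq N$, the key observation is that $r\in N_\zeta\subseteq N$, so every coordinate of $r$ lies in $N$, while $q{\restriction}_N\in N$ by the witness property of $q$; since $s{\restriction}_N$ is obtained from $r$ and $q{\restriction}_N$ by the same definable amalgamation, it too lies in $N$. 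This bookkeeping, reading $s{\restriction}_N$ off parameters already known to be in $N$, is the crux, and once it is in place the lemma follows.
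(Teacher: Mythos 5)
Your proposal follows the same route as the paper's own proof: first show that for every $q \leq p'$ the restriction $q{\restriction}_{N_\zeta}$ is a condition in $\mathbb{P}[\alpha] \cap N_\zeta$ (using that $N_\zeta \in \mathcal{W}_{p'} \subseteq \mathcal{W}_q$, which is exactly where non-limitness of $\zeta$ enters), and then amalgamate any $r \leq q{\restriction}_{N_\zeta}$ with $q$ via $\mathcal{M}_s = \mathcal{M}_r \cup \mathcal{M}_q$, $f_s$ agreeing with $f_r$ on $\mathcal{M}_r$ and with $f_q$ on $\mathcal{M}_q \setminus \mathcal{M}_r$, and $\mathcal{W}_s = \mathcal{W}_r \cup \mathcal{W}_q$. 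Two of the delicate points you handle correctly and in the same spirit as the paper: the verification of the witness clause for $s$ (splitting $\mathcal{W}_s$ into $\mathcal{W}_r$ and $\mathcal{W}_q \setminus \mathcal{W}_r$), and the fact that $\mathcal{M}_q \cap N_\zeta$ must have a largest element. In fact your argument for the latter --- if $N_\zeta$ sat at a limit index of $\mathcal{M}_q$, continuity would give $N_\zeta = \bigcup(\mathcal{M}_q \cap N_\zeta)$, while $\mathcal{M}_q \cap N_\zeta \in N_\zeta$ yields $\bigcup(\mathcal{M}_q \cap N_\zeta) \in N_\zeta$, hence $N_\zeta \in N_\zeta$ --- is correct and is a point the paper's own proof passes over silently; this is a genuine improvement in care.

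There is, however, one genuine omission: you never verify that $\mathcal{M}_s$ has admissible length, i.e.\ that $\gamma_s < \alpha$. This is a defining clause of membership in $\mathbb{P}[\alpha]$, and it is the \emph{only} place in the whole lemma where indecomposability of $\alpha$ is used: since $\mathcal{M}_s = \mathcal{M}_r {}^{\frown} \langle N_\zeta \rangle {}^{\frown} \langle N \in \mathcal{M}_q : N_\zeta \in N \rangle$, its order type is at most $(\gamma_r + 1) + (\gamma_q + 1)$, which is below $\alpha$ precisely because $\alpha$ is indecomposable and $\gamma_r, \gamma_q < \alpha$. You list the growth condition on $f_s$ and the extension property as the remaining routine checks, but this one is missing from your list, and it cannot be waved through as routine: for a decomposable ordinal the union of two admissible chains can have order type $\geq \alpha$, in which case $s$ is simply not a condition --- indeed the failure of exactly this amalgamation at length $\alpha$ is why $\mathbb{P}[\alpha]$ fails to be $\alpha$-proper. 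A proof of this lemma that nowhere invokes indecomposability cannot be complete; adding this one line (as the paper does at the start of its amalgamation claim) closes the gap.
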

\begin{proof}
We will proof this lemma in  a series of claims. Thus suppose that $\zeta \leq \beta$ is not a limit ordinal.
\begin{claim}\label{N_0}
If $q \leq p'$, then $q{\restriction}_{N_\zeta} \in \mathbb{P}[\alpha] \cap N_\zeta$.
\end{claim}
\begin{proof}[Proof of the Claim]
The proof is straightforward, the key point is that since $\mathcal{W}_{p'} \subseteq \mathcal{W}_q$, so $N_\zeta \in \mathcal{W}_q$,
which in particular implies that $q{\restriction}_{N_\zeta} \in N_\zeta$.
\end{proof}
\begin{claim}\label{sN_0}
Assume $D \subseteq \mathbb{P}[\alpha] \cap N_\zeta$ is dense open, and suppose that $r \in D$ is an extension of $q{\restriction}_{N_\zeta}$. Let $s=\langle \mathcal{M}_s, f_s, \mathcal{W}_s \rangle$ be such that $\mathcal{M}_s= \mathcal{M}_r \cup \mathcal{M}_q$, $f_s{\restriction}_{\mathcal{M}_r}= f_r$, $f_s{\restriction}_{\mathcal{M}_q \setminus \mathcal{M}_r}=f_q$ and $\mathcal{W}_s= \mathcal{W}_r \cup \mathcal{W}_q$. Then $s$ is a condition which extends both of $r$ and $q$.
\end{claim}
\begin{proof}[Proof of the Claim]  By construction, it is easy to see $s$ is a common extension of $r$ and $q$, so we just have to show that $s \in \mathbb{P}[\alpha]$.
 Since the order types of both $\mathcal{M}_r$ and $\mathcal{M}_q$ are less than the indecomposable ordinal $\alpha$, so is the order type of $\mathcal{M}_s$ (which is at most $\gamma_r + \gamma_q$). Furthermore, $\mathcal{M}_s$ is an $\in$-increasing continuous chain, of the form
 \[
 \mathcal{M}_s=\mathcal{M}_r ^{\frown} \langle N_\zeta \rangle ^{\frown} \langle N \in \mathcal{M}_q: N_\zeta \in N \rangle.
 \]
  Note that $r \in N_\zeta$ gives us $\mathcal{M}_r \in N_\zeta$, in particular $M^r_{\gamma_r} \in N_\zeta$ and $f_s(M^s_{\gamma_r})=f_r(M^r_{\gamma_r}) \in N_\zeta$. Also, $f^s(N_\zeta)=f^q(N_\zeta) \in N$, for the least $N \in \mathcal{M}_q$ with $N_\zeta \in N$.  It immediately follows that for every $\xi < \gamma_s,$  $f_s(M^s_{\xi}) \in M^s_{\xi+1}$.

  Now let $Q \in \mathcal{W}_s$. If $Q \in \mathcal{W}_r$, then $s{\restriction}_Q=r{\restriction}_Q \in Q$. If $Q \in \mathcal{W}_q \setminus \mathcal{W}_r$, then $s{\restriction}_Q= s{\restriction}_{N_\zeta} \cup s{\restriction}_{(N_{\zeta}, Q)}=r \cup q{\restriction}_{(N_{\zeta}, Q)}=r \cup q{\restriction}_Q$ which  clearly is in $Q$.
\end{proof}
It follows from the above claim that $p'$ is  a strongly $(N_\zeta, \mathbb{P}[\alpha])$-generic condition.
\end{proof}
The next lemma is well-known, which takes care of the limit steps.
\begin{lemma}
Suppose  $\zeta \leq \beta$ is a limit ordinal and for every ordinal $\eta$ less than $\zeta$, $p'$ is $(N_{\eta}, \mathbb{P}[\alpha])$-generic condition. Then $p'$ is an $(N_{\zeta}, \mathbb{P}[\alpha])$-generic condition.
\end{lemma}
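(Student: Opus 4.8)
The plan is to prove this limit-step lemma using the standard argument for preservation of genericity at limits of towers. Recall that $\zeta\leq\beta$ is a limit ordinal, so by clause (4) in the definition of a tower we have $N_\zeta=\bigcup_{\eta<\zeta}N_\eta$, and by assumption $p'$ is already $(N_\eta,\mathbb{P}[\alpha])$-generic for every $\eta<\zeta$. I want to show $p'$ is $(N_\zeta,\mathbb{P}[\alpha])$-generic, i.e.\ for every dense open $D\subseteq\mathbb{P}[\alpha]$ with $D\in N_\zeta$, the set $D\cap N_\zeta$ is predense below $p'$.

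First I would fix such a $D$ and an arbitrary extension $q\leq p'$, aiming to find a condition in $D\cap N_\zeta$ compatible with $q$. The key observation is that since $D\in N_\zeta$ and $N_\zeta=\bigcup_{\eta<\zeta}N_\eta$, there is some $\eta<\zeta$ with $D\in N_\eta$. Here I would use elementarity: each $N_\eta\prec H(\lambda)$ and, by clause (5) of the tower definition, $N_\eta$ knows the relevant initial segment of the tower, so membership of $D$ in the union is reflected into a single $N_\eta$. Having located $\eta$ with $D\in N_\eta$, I invoke the hypothesis that $p'$ is $(N_\eta,\mathbb{P}[\alpha])$-generic. Since $D$ is dense open and $D\in N_\eta$, the set $D\cap N_\eta$ is predense below $p'$, so there is $r\in D\cap N_\eta$ compatible with $q$ (as $q\leq p'$). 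Finally, since $N_\eta\subseteq N_\zeta$, we have $r\in D\cap N_\zeta$, giving the required witness. This shows $D\cap N_\zeta$ is predense below $p'$, establishing $(N_\zeta,\mathbb{P}[\alpha])$-genericity.

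The one subtlety worth checking carefully is the claim that $D\in N_\zeta$ forces $D\in N_\eta$ for some single $\eta<\zeta$; this is exactly where continuity of the tower at the limit $\zeta$ (clause (4)) is used, together with the fact that $D$ is a single set rather than a sequence that might be cofinally spread across the $N_\eta$. Since $D\in N_\zeta=\bigcup_{\eta<\zeta}N_\eta$ and the union is increasing, the element $D$ lands in some $N_\eta$ outright, so no genuine obstacle arises. I expect the main point to emphasize is simply that this argument does not require $p'$ to be \emph{strongly} generic at the approximating stages: plain $(N_\eta,\mathbb{P}[\alpha])$-genericity suffices, which is why the limit clause of $(\ast)^{\beta,\mathcal{N}}_{p'}$ only asks for ordinary genericity. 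This is precisely the ``well-known'' fact that genericity (unlike strong genericity) passes to unions of continuous chains of models, so the proof is a short and routine verification rather than one presenting a real difficulty.
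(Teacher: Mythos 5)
Your proof is correct and takes essentially the same route as the paper's: both rest on the single observation that $N_{\zeta}=\bigcup_{\eta<\zeta}N_{\eta}$, so any one object of $N_{\zeta}$ already lies in some $N_{\eta}$, where the hypothesis at $\eta$ applies (the paper runs this through the equivalent characterization of genericity via a name $\dot{\tau}\in N_{\zeta}$ for an ordinal, forcing $\dot{\tau}\in N_{\eta}\subseteq N_{\zeta}$, while you work directly with the dense-open-set definition, which is if anything more self-contained). One small remark: your mid-proof appeal to elementarity and clause (5) of the tower definition is unnecessary, since $D\in\bigcup_{\eta<\zeta}N_{\eta}$ literally means $D\in N_{\eta}$ for some $\eta<\zeta$, as you yourself note at the end; and your closing observation that the argument fails for \emph{strong} genericity (a dense open $D\subseteq\mathbb{P}[\alpha]\cap N_{\zeta}$ need not reflect to any $N_{\eta}$) correctly explains why the paper's property $(\ast)^{\beta,\mathcal{N}}_{p'}$ demands only ordinary genericity at limits.
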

\begin{proof}
Let $\dot{\tau} \in N_{\zeta}$ be a name of an ordinal. Since $N_{\zeta}= \bigcup_{\eta < \zeta}N_{\eta}$, so $\dot{\tau} \in N_{\eta}$ for some $\eta < \zeta$. By assumption, $p' \Vdash `` N_{\eta} \cap \on = N_{\eta}[\dot{G}] \cap \on"$, i.e. $p' \Vdash `` \dot{\tau} \in N_{\eta}"$ which implies $p' \Vdash `` \dot{\tau} \in N_{\zeta}$. Since $\dot{\tau}$ is arbitrary, so $p'$ is $(N_{\zeta}, \mathbb{P}[\alpha])$-generic condition.
\end{proof}
We now show that $\mathbb{P}[\alpha]$ is not $\alpha$-proper.
\begin{lemma}
$\mathbb{P}[\alpha]$ is not $\alpha$-proper.
\end{lemma}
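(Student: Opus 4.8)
The plan is to adapt the argument of Section~\ref{2} showing that $\mathbb{P}$ is not $\omega$-proper, replacing ``finite'' by ``order type $<\alpha$'' and the bound $\delta_{N_\omega}=\sup_n\delta_{N_n}$ by $\delta_{N_\alpha}=\sup_{\zeta<\alpha}\delta_{N_\zeta}$. Fix an $\alpha$-tower $\mathcal{N}=\langle N_\zeta:\zeta\le\alpha\rangle$ with $\mathbb{P}[\alpha],\dot{C}\in N_0$ and a condition $p\in\mathbb{P}[\alpha]\cap N_0$, and suppose toward a contradiction that some $q\le p$ is $(\mathcal{N},\mathbb{P}[\alpha])$-generic. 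First I would record, exactly as in the non-$\omega$-properness proof, that for every $\zeta\le\alpha$ the condition $q$ is $(N_\zeta,\mathbb{P}[\alpha])$-generic and $\dot{C}\in N_\zeta$, so $q\Vdash$``$\dot{C}\cap\delta_{N_\zeta}$ is unbounded in $\delta_{N_\zeta}$''; since $q\Vdash$``$\dot{C}$ is a club'', closedness of $\dot{C}$ gives $q\Vdash$``$\delta_{N_\zeta}\in\dot{C}$''. Thus $q$ forces every $\delta_{N_\zeta}$ (for $\zeta\le\alpha$) into $\dot{C}$, and this is the property I will contradict.

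Next I would extend $q$ in $V$ to a condition $q'$, with $\mathcal{M}_{q'}=\langle M^{q'}_\xi:\xi\le\gamma_{q'}\rangle$, so that $\delta_{N_\alpha}<\delta_{M^{q'}_{\gamma_{q'}}}$ (possible by the density fact underlying the club lemma, applied with $\gamma=\delta_{N_\alpha}$), while the $\in$-least model satisfies $\delta_{M^{q'}_0}<\delta_{N_\alpha}$ (inherited from $q\le p\in N_0$, whose models have $\delta$ below $\delta_{N_0}$). Every extension of $q$ still forces all $\delta_{N_\zeta}\in\dot{C}$, so it suffices to produce $q''\le q'$ with $q''\Vdash$``$\delta_{N_m}\notin\dot{C}$'' for some $m<\alpha$. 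The heart of the matter is a counting of order types. Let $m_0<\alpha$ be least with $\delta_{M^{q'}_0}<\delta_{N_{m_0}}$; since $\alpha$ is indecomposable, the tail $\{\delta_{N_m}:m_0\le m<\alpha\}$ still has order type $\alpha$, and it lies in $(\delta_{M^{q'}_0},\delta_{N_\alpha})\subseteq(\delta_{M^{q'}_0},\delta_{M^{q'}_{\gamma_{q'}}})$. The ``model points'' $\{\delta_{M^{q'}_\xi}:\xi\le\gamma_{q'}\}$ form a set of order type $\gamma_{q'}+1$, which is $<\alpha$ because the indecomposable $\alpha$ is a limit ordinal. Since a set of order type $\alpha$ cannot be contained in one of smaller order type, some $\delta_{N_m}$ with $m_0\le m<\alpha$ is not a model point.

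I would then locate the successor gap around this $\delta_{N_m}$ and block it as in the club lemma. Letting $\eta+1$ be the least index $\mu$ with $\delta_{N_m}<\delta_{M^{q'}_\mu}$, continuity of $\mathcal{M}_{q'}$ forces this least index to be a successor (a limit least index $\mu$ would give $\delta_{M^{q'}_\mu}=\sup_{\xi<\mu}\delta_{M^{q'}_\xi}\le\delta_{N_m}$); minimality gives $\delta_{M^{q'}_\eta}\le\delta_{N_m}<\delta_{M^{q'}_{\eta+1}}$, and $\delta_{M^{q'}_\eta}\ne\delta_{N_m}$ since $\delta_{N_m}$ is not a model point, so $\delta_{M^{q'}_\eta}<\delta_{N_m}<\delta_{M^{q'}_{\eta+1}}$. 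Now choose $\zeta\in M^{q'}_{\eta+1}$ with $\delta_{N_m}<\zeta<\delta_{M^{q'}_{\eta+1}}$ and $\zeta\notin f_{q'}(M^{q'}_\eta)$ (possible since $f_{q'}(M^{q'}_\eta)$ is finite), and let $q''$ agree with $q'$ except that $f_{q''}(M^{q'}_\eta)=f_{q'}(M^{q'}_\eta)\cup\{\zeta\}$. Because $\zeta<\delta_{M^{q'}_{\eta+1}}\le\delta_N$ for every later $N\in\mathcal{W}_{q'}$, one checks $q''\in\mathbb{P}[\alpha]$ and $q''\le q'\le q$; moreover every extension of $q''$ forces $\dot{C}\cap(\delta_{M^{q'}_\eta},\zeta]=\emptyset$, so $q''\Vdash$``$\delta_{N_m}\notin\dot{C}$'', contradicting $q\Vdash$``$\delta_{N_m}\in\dot{C}$''.

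The main obstacle, and the only genuinely new point compared with the finite case of Section~\ref{2}, is that $\delta_{N_\alpha}$ or various $\delta_{N_m}$ may sit at limit indices of $\mathcal{M}_{q'}$, where the continuity of the model sequence leaves no successor gap to exploit. The order-type count is exactly what overcomes this: it guarantees that at least one (indeed $\alpha$-many) of the forced club points $\delta_{N_m}$ escapes the fewer-than-$\alpha$ model points and hence lands strictly inside an exploitable successor gap. This is precisely where the indecomposability of $\alpha$ is essential, both to force $\gamma_{q'}+1<\alpha$ and, since the least model of $q'$ may lie above some initial $\delta_{N_m}$'s, to ensure that the surviving tail $\{\delta_{N_m}:m_0\le m<\alpha\}$ retains full order type $\alpha$.
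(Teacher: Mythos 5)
Your proof is correct, and while it follows the paper's overall strategy, it executes the key step differently. Both arguments start the same way: if $q$ were $(\mathcal{N},\mathbb{P}[\alpha])$-generic, then $q$ forces every $\delta_{N_\zeta}$ into $\dot{C}$, and the contradiction is obtained by blocking one such point with the mechanism from the club lemma (adding an ordinal to an $f$-value, which forbids $\dot{C}$ from meeting an interval). The difference is in how the blockable point is found. The paper uses indecomposability to find an interval $[\delta_{N_\zeta},\delta_{N_{\zeta+2}})$ containing \emph{two} consecutive tower heights and no $\delta_M$ with $M\in\mathcal{M}_q$; it then \emph{inserts a new model} $\tilde{M}$ with $\delta_{\tilde{M}}=\delta_{N_\zeta}$ into the gap (whose existence is itself extracted from the forcing relation, namely from $q\Vdash\delta_{N_\zeta}\in\dot{C}$), and blocks the next tower point $\delta_{N_{\zeta+1}}$ by setting $f_r(\tilde{M})=\{\delta_{N_{\zeta+1}}+1\}$. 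You instead use a pure order-type pigeonhole: the model heights of $q'$ form a set of order type $\gamma_{q'}+1<\alpha$, while the relevant tower heights form a set of order type $\alpha$, so some $\delta_{N_m}$ is not a model height; continuity then places it strictly inside a successor gap $(\delta_{M^{q'}_\eta},\delta_{M^{q'}_{\eta+1}})$, and you block it directly by enlarging $f$ at the \emph{existing} model $M^{q'}_\eta$. Your route is somewhat more elementary: it needs only one tower point rather than two in a model-free region, and it entirely avoids the auxiliary-model construction, which is the one step in the paper's proof requiring a nontrivial justification (that a model of height exactly $\delta_{N_\zeta}$ lying between $M^q_\eta$ and $M^q_{\eta+1}$ exists). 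The paper's version, in exchange, exhibits the structural fact that forced club points are realizable as model heights compatible with $q$, but for the purpose of refuting $\alpha$-properness your shortcut fully suffices; your condition-hood and blocking verifications ($\zeta\in M^{q'}_{\eta+1}$, $\zeta<\delta_N$ for all later $N\in\mathcal{W}_{q'}$, and $\dot{C}\cap(\delta_{M^{q'}_\eta},\zeta]=\emptyset$ in any extension) are exactly the checks the paper itself performs in its club lemma.
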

\begin{proof}
Let $\mathcal{N}=\langle N_{\zeta} \colon \zeta \leq \alpha \rangle$ be an arbitrary $\alpha$-tower,with $\mathbb{P}[\alpha], \dot{C} \in N_0$,
 where $\dot{C}$ is the canonical name for the club added by the forcing. Let $p \in \mathbb{P}[\alpha] \cap N_0$, and assume towards contradiction that $q \leq p$ is an $(\mathcal{N}, \mathbb{P}[\alpha])$-generic condition. Without loss of generality, assume $\delta_{N_{\alpha}} < \delta_{M^q_{\gamma_q}}$. For every $\zeta \leq \alpha$, as   $\dot{C} \in N_\zeta$ and $q$ is an $(N_\zeta, \mathbb{P}[\alpha])$-generic condition, $q$ forces $\dot{C} \cap \delta_{N_{\zeta}}$ is unbounded in $\delta_{N_{\zeta}}$,  hence $q \Vdash$``$ \delta_{N_{\zeta}} \in \dot{C}$''.

As $\alpha$ is indecomposable, and the set $\mathcal{M}_q$ has order type less than $\alpha,$ we can find some ordinal $\zeta < \alpha$
such that
\[
\{\delta_M: M \in \mathcal{M}_q\} \cap [\delta_{N_\zeta}, \delta_{N_{\zeta+2}})  = \emptyset.
\]
Let $\eta < \gamma_q$ be maximal such that $\delta_{M^q_\eta} < \delta_{N_\zeta}.$ It then follows that $\delta_{M^q_{\eta+1}} \geq  \delta_{N_{\zeta+2}}.$ As  $q \Vdash$``$ \delta_{N_{\zeta}} \in \dot{C}$'', we can find some model $\tilde{M}$
such that:
\begin{itemize}
\item $\tilde{M} \in M^q_{\eta+1}$,
\item $M^q_\eta \in \tilde{M},$
\item $\delta_{\tilde{M}}= \delta_{N_\zeta}.$
\end{itemize}
let $r$ be such that $\mathcal{M}_{r}= \mathcal{M}_q \cup \{\tilde{M}\}$, $f_{r}(M)=f_q(M)$ for all $M \in \mathcal{M}_q$,  $f_{r}(\tilde{M})= \{\delta_{N_{\zeta+1}}+1\}$ and $\mathcal{W}_{r}= \mathcal{W}_q$. Then $r$ is a condition. The main point is that for all $N \in \mathcal{W}_{r}$,
if $\tilde{M} \in N$, then $\delta_{N_{\zeta+1}}+1 \in N$, as $\delta_{N_{\zeta+1}}+1 < \delta_{N_{\zeta+2}} \leq \delta_N$. Clearly $r$ is an extension of $q$ and it  forces that $\delta_{N_{\zeta+1}} \notin \dot{C}$, which is a contradiction.
\end{proof}
Theorem \ref{main} follows.

\end{document}